\documentclass{article}
\usepackage[utf8]{inputenc}
\usepackage{amsmath,amsfonts,amssymb}
\usepackage{a4wide,amsthm}
\usepackage[english]{babel}
\usepackage[babel=true]{csquotes}
\usepackage{textcomp}
\usepackage{comment}
\usepackage{hyperref} 
\usepackage{pdfpages}
\usepackage{pgf,tikz}
\usepackage{enumitem}
\usepackage{mathrsfs}
\usetikzlibrary{arrows}
\usepackage{mathrsfs}
\renewcommand{\div}{\operatorname{div}}
{\newtheorem{thm}{Theorem}[section]}
{\newtheorem{prop}[thm]{Proposition}}
{}
{\newtheorem{lemme}[thm]{Lemma}}
{}
{}

\providecommand{\R}{\mathbb{R}}

\providecommand{\N}{\mathbb{N}}

\renewcommand{\leq}{\leqslant}
\renewcommand{\geq}{\geqslant}

\renewcommand{\ge}{\geqslant}

\renewcommand{\div}{\operatorname{div}}
\newcommand{\curl}{\operatorname{curl}}
\newcommand{\trace}{\operatorname{tr}}
\newcommand{\as}{\operatorname{as}}

%%%%%%%%%%%%%%%%%%%%%%%%%%%%%%%%%%%%%%%%%%%%%%%%%%
\newcommand{\bq}{\begin{equation}}
\newcommand{\eq}{\end{equation}}
\def\P{\mathbb P}

%%%%%%%%%%%%%%%%%%%%%%%%%%%%%%%%%%%%%%%%%%%%%%%%%%

\newcommand{\Addresses}{{% additional braces for segregating \footnotesize
  \bigskip
  \footnotesize

 \bigskip
 
\noindent  A.~Mecherbet, \textsc{Institut de Math\'ematiques de Jussieu-Paris Rive Gauche, Universit\'e Paris Cit\'e,
    8~Place Aur\'elie Nemours,
F75205 Paris Cedex~13, France}\par\nopagebreak
\noindent  \textit{E-mail address: }\texttt{mecherbet@imj-prg.fr}

  \bigskip

\noindent  F.~Sueur, \textsc{Department of Mathematics, Maison du nombre, 6 avenue de la Fonte, 
University of Luxembourg, L-4364 Esch-sur-Alzette, Luxembourg}\par\nopagebreak 
\noindent  \textit{E-mail address:} \texttt{Franck.Sueur@uni.lu}
}}

\title{On the analyticity of the flow map for 
 the AHT equations}

\author{Amina Mecherbet and Franck Sueur}

\date{\today}

\textheight 639pt
\textwidth  488pt
\oddsidemargin  -1mm
\evensidemargin -1mm
\topmargin      -8mm
\pagestyle{plain}

\begin{document}

\maketitle

\abstract{The AHT equation is a non linear and non local vectorial  transport equation which was introduced in 2003 by Angenent, Haker and Tannenbaum in optimal transport theory. 
For this equation, classical solutions are known to exist at least locally in time, and a flow map can thus be uniquely associated with these solutions. In this paper we consider the case where the equation is set in a bounded domain with an analytic boundary and we prove that the flow map is analytic with respect to time. 
%This extends some earlier results on the incompressible Euler equations to the AHT equations.
%for which the Biot-Savart law is more singular.
}

\tableofcontents

\maketitle

\section{Introduction and statement of the result}

Let $\Omega$  a smooth\footnote{this assumption will be reinforced later on.} bounded domain in $\R^d$, where $d\ge 2$. In \cite{AHT} 
 Angenent, Haker, and Tannenbaum   proposed the following equation, so-called the  AHT equation: 
\begin{equation} \label{AHT}\tag{AHT}
\begin{aligned}
\partial_t y+u\cdot \nabla y&=0,\\
u&=\P y,
\end{aligned}
\end{equation}
where $y = y(x,t) \in \R^d$, $x\in \Omega \subset \R^d$, $t\ge 0$, and  $\P$ denotes the classical Leray projector onto the space of divergence-free vector fields. 
  The Leray projector $u = \P y$ is defined as follows: for a given map $y:\Omega\to \R^d$, we construct the potential $p$ as the unique solution with zero mean value of the following boundary value problem:  
\begin{equation}\label{def-P} 
\left\{\begin{aligned}\Delta p &= \div  y  \qquad \mbox{in}\quad \Omega 
\\
\frac{\partial p}{\partial n} & = y \cdot n  \qquad \mbox{on}\quad \partial \Omega\end{aligned}
\right.\end{equation}
where $n$ is the unit outward normal to $\partial \Omega$. Then we define 
\begin{equation}\label{def_Leray_projection}
\P y = y - \nabla p.
\end{equation}
 As a consequence of the definition, the velocity $u=\P y$ is tangent to the boundary: 
\begin{equation}\label{BCs} 
u\cdot n = 0  \qquad \mbox{on} \quad \partial\Omega ,
\end{equation}  
and divergence free in $\Omega$: 
\begin{equation}\label{Bdiv} 
\div u = 0  \qquad \mbox{in} \quad \Omega .
\end{equation}  
The Leray projector is linear continuous in a wide range of Sobolev and  H\"older spaces. For instance there exists a constant $C_\Omega > 0$ such that for any $y$ in $C^{1,\alpha}(\Omega)$, 
\begin{equation}\label{eq:def_C_Omega}
\| \P y\|\leq C_\Omega \|y \|.
\end{equation}
where we use the following notations for $\alpha \in(0,1)$
\begin{eqnarray*}
    \| \cdot\|=\|\cdot\|_{C^{1,\alpha}(\Omega)}, 
    , &
       | \cdot|=\|\cdot\|_{C^{0,\alpha}(\Omega)} .
       %,&  | \cdot|_{\partial \Omega}=\|\cdot\|_{C^{0,\alpha}(\partial \Omega)}
\end{eqnarray*}
We will also use the following H\"older space on the boundary.
$$  \| \cdot\|_{\partial \Omega} =\|\cdot\|_{C^{1,\alpha}(\partial \Omega)} .$$ 

Angenent, Haker and Tannenbaum have introduced this equation in the context of the optimal transport theory. 
We also refer to the paper \cite{Brenier09} by Brenier where the AHT equation is further discussed, again in the context of the optimal transport theory, and also where another interpretation related to the convection theory is given. 

Local-in-time existence and uniqueness of classical solutions of \eqref{AHT} 
is then easy to obtain by standard technics from nonlinear hyperbolic theory. 
Next result, stated in terms of  H\"older spaces $C^{1, \alpha}(\Omega)$ has been obtained in \cite{AHT}. As mentioned in \cite{Nguyen^2} it can be also stated alternatively  in some appropriate  Sobolev spaces.
\begin{thm}\label{lwp}
Let $\Omega$ be a bounded domain in $\R^d$, with smooth boundary. Let $\alpha$ in $(0,1)$. 
Then for any initial data $y_0\in C^{1, \alpha}(\Omega)$, there exist a positive time $T^\star>0$ depending only on $\| y_0\|_{C^{1, \alpha}(\Omega)}$ and a unique solution $y\in C((-T^\star, T^\star); C^{1, \alpha}(\Omega))$ of \eqref{AHT}.
\end{thm}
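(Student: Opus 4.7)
The strategy is a classical Picard iteration in the Banach space $C^{1,\alpha}(\Omega)$, adapted to the non-local character of the equation. Define $y^0 \equiv y_0$ and, inductively, let $u^n := \mathbb{P} y^n$ and let $y^{n+1}$ be the solution of the linear transport problem
\begin{equation*}
\partial_t y^{n+1} + u^n \cdot \nabla y^{n+1} = 0, \qquad y^{n+1}(\cdot, 0) = y_0.
\end{equation*}
The crucial point is that $u^n$ is divergence free in $\Omega$ and tangent to $\partial \Omega$ by \eqref{BCs}-\eqref{Bdiv}, so its flow $\Phi^n_t$ is a well-defined $C^{1,\alpha}$ diffeomorphism of $\overline{\Omega}$ for as long as $u^n$ is controlled in $C^{1,\alpha}$; thus $y^{n+1}(x,t) = y_0((\Phi^n_t)^{-1}(x))$ defines $y^{n+1}$ without needing any boundary condition.

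The first step is to propagate uniform $C^{1,\alpha}$ bounds. Standard transport estimates combined with the continuity of $\mathbb{P}$ provided by \eqref{eq:def_C_Omega} yield
\begin{equation*}
\|y^{n+1}(t)\| \leq \|y_0\| \exp\!\Bigl( C \int_0^t \|u^n(s)\| \, ds \Bigr) \leq \|y_0\| \exp\!\Bigl( C\, C_\Omega \int_0^t \|y^n(s)\| \, ds \Bigr).
\end{equation*}
By comparison with the scalar ODE $\dot{M} = C\, C_\Omega M^2$, one extracts a time $T^\star > 0$, depending only on $\|y_0\|$, on which all the iterates $y^n$ remain uniformly bounded in $L^\infty([-T^\star, T^\star]; C^{1,\alpha}(\Omega))$.

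Next I would prove that $(y^n)$ is a Cauchy sequence in the weaker norm $C^{0,\alpha}(\Omega)$: writing
\begin{equation*}
\partial_t (y^{n+1} - y^n) + u^n \cdot \nabla(y^{n+1} - y^n) = -(u^n - u^{n-1}) \cdot \nabla y^n,
\end{equation*}
and using again \eqref{eq:def_C_Omega} together with the uniform $C^{1,\alpha}$ bounds, a standard Gronwall argument on $[-T^\star, T^\star]$ (possibly after shrinking $T^\star$) produces a geometric series. Passing to the limit and interpolating with the uniform $C^{1,\alpha}$ bound gives convergence in every $C^{1, \alpha'}$ for $\alpha' < \alpha$, and the limit $y$ belongs to $C([-T^\star, T^\star]; C^{1,\alpha}(\Omega))$ by weak-$\ast$ compactness and a classical continuity-in-time argument. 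Uniqueness follows by the same calculation applied to the difference of two solutions with equal initial data.

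The main technical obstacle is not in the Picard scheme itself but in the transport estimates on the bounded domain $\Omega$: one needs that the $C^{1,\alpha}$ norm of $y \circ \Phi_t^{-1}$ is controlled by $\|y_0\|$ times an exponential of $\int \|\nabla u\|_{C^{0,\alpha}}$, \emph{uniformly up to $\partial \Omega$}. This is where the tangency condition $u \cdot n = 0$ plays its role: it guarantees that $\Phi_t$ maps $\overline{\Omega}$ to itself and that the Jacobian of $\Phi_t^{-1}$ does not degenerate near the boundary, so that the standard interior H\"older estimates for the flow extend globally to $\Omega$. Once this point is granted, the remainder of the proof reduces to bookkeeping of constants as in the classical Cauchy-Lipschitz theory for hyperbolic systems.
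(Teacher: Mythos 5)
You should first be aware that the paper does not prove Theorem \ref{lwp} at all: it quotes the result from \cite{AHT}, remarking only that it follows ``by standard technics from nonlinear hyperbolic theory.'' Your Picard iteration along the flow of the divergence-free, tangent field $u^n=\mathbb{P}y^n$ --- uniform $C^{1,\alpha}$ bounds closed by comparison with the Riccati equation $\dot M = C\,C_\Omega M^2$ (the comparison closes exactly, since $M(t)=\|y_0\|\exp(C\,C_\Omega\int_0^t M)$), contraction in the weaker $C^{0,\alpha}$ norm, uniqueness by the same Gronwall computation --- is precisely that standard route, so there is no divergence of method to report. One bookkeeping point: the contraction step requires the Leray projector to be bounded on $C^{0,\alpha}(\Omega)$, which is what converts $\|y^n-y^{n-1}\|_{C^{0,\alpha}}$ into $\|u^n-u^{n-1}\|_{C^{0,\alpha}}$; this holds on smooth bounded domains (the paper alludes to boundedness ``in a wide range of Sobolev and H\"older spaces''), but it is not the estimate \eqref{eq:def_C_Omega} you invoke, which is stated at the $C^{1,\alpha}$ level, and it should be cited explicitly.

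The one step that does not hold as written is the endpoint continuity in time. Weak-$\ast$ compactness gives $y\in L^\infty((-T^\star,T^\star);C^{1,\alpha})$, and interpolating with the $C^{0,\alpha}$ convergence gives continuity with values in $C^{1,\alpha'}$ for every $\alpha'<\alpha$; but there is no ``classical continuity-in-time argument'' that upgrades this to strong continuity with values in $C^{1,\alpha}$. H\"older spaces are not separable, and composition is not strongly continuous on them: already for the linear transport problem, $t\mapsto y_0\circ\Phi_t^{-1}$ fails to be continuous into $C^{1,\alpha}$ for generic $y_0\in C^{1,\alpha}$. For instance, if $\Phi_t$ is a rigid rotation of a disc (a smooth, divergence-free, tangent flow) and $\nabla y_0$ has a cusp of type $|x-x_0|^\alpha$ at an interior point $x_0$ away from the center, then $\|\nabla y_0\circ\Phi_t^{-1}-\nabla y_0\circ\Phi_s^{-1}\|_{C^{0,\alpha}}$ is bounded below by a positive constant for all small $t\neq s$, since near $x_0$ the rotation acts like a translation and $\||\cdot-h|^\alpha-|\cdot|^\alpha\|_{C^{0,\alpha}}\geq 2$ for every $h\neq 0$. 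In particular your iterates $y^{n+1}=y_0\circ(\Phi^n_t)^{-1}$ are themselves not strongly continuous in $C^{1,\alpha}$ in general, so the scheme cannot produce strong continuity in the limit. The honest output of your construction is $y\in C((-T^\star,T^\star);C^{1,\alpha'})$ for all $\alpha'<\alpha$, together with continuity into $C^{1,\alpha}$ only for its weak-$\ast$ topology; strong continuity at the top regularity holds if $y_0$ lies in the little H\"older space (the closure of smooth functions in $C^{1,\alpha}$), and this is how the statement --- inherited verbatim from \cite{AHT} --- should be read. This caveat affects the precise formulation of the statement rather than the core of your argument, which is otherwise complete and is the standard proof.
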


 Note that by  \eqref{def-P} the velocity 
 $u=\mathbb{P}y$ lies also in $C((-T^\star,T^\star),C^{1,\alpha})$.
 It is therefore classical to define, in a unique way, a corresponding flow map. 

\begin{prop}\label{flow}
Let $\Omega$ be a bounded domain in $\R^d$, with smooth boundary, $\alpha$ in $(0,1)$ and $y_0\in C^{1, \alpha}(\Omega)$
Let $y$ be the associated solution by Theorem \ref{lwp}. 
Then there is a unique associated flow  $\Phi$ in $C((-T^\star, T^\star); C^{1, \alpha}(\Omega))$ such that for any $x$ in $\Omega$, 
\begin{equation} \label{def-flot}
\partial_t  \Phi(t,x)  = u (t, \Phi(t,x) )  \text{ and }
\Phi(0,x) = x .
\end{equation}
\end{prop}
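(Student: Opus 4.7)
The plan is a standard Cauchy-Lipschitz/Picard-Lindel\"of argument, adapted to the case of a velocity field that is tangent to the boundary. The starting observation is that since $y\in C((-T^\star,T^\star);C^{1,\alpha}(\Omega))$, the continuity estimate \eqref{eq:def_C_Omega} for the Leray projector yields $u=\P y \in C((-T^\star,T^\star);C^{1,\alpha}(\Omega))$, hence on any compact subinterval $I\Subset(-T^\star,T^\star)$ the field $u$ is uniformly Lipschitz in $x$, tangent to $\partial\Omega$ by \eqref{BCs}, and divergence-free in $\Omega$ by \eqref{Bdiv}.

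First I would establish, for each fixed $x\in\overline{\Omega}$, local-in-time existence and uniqueness of $t\mapsto \Phi(t,x)$ via a Banach fixed-point argument on the integral formulation $\Phi(t,x)=x+\int_0^t u(s,\Phi(s,x))\,ds$, using the uniform Lipschitz bound on $u$ to obtain a contraction on a small time interval.

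Second -- and this is the main obstacle -- I need to verify that trajectories remain in $\overline{\Omega}$, so that $u(s,\Phi(s,x))$ is well-defined. I would argue via the signed distance function $d(\,\cdot\,,\partial\Omega)$: since $u\cdot n=0$ on $\partial\Omega$ and $\nabla u$ is bounded on $\overline{\Omega}$, a first-order Taylor expansion of $u\cdot\nabla d$ near the boundary gives
\[
\bigl|\tfrac{d}{dt}\,d(\Phi(t,x),\partial\Omega)\bigr|\le C\,d(\Phi(t,x),\partial\Omega),
\]
so Gr\"onwall's inequality forbids the signed distance from changing sign in either time direction, proving that $\overline{\Omega}$ is flow-invariant. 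Equivalently one may invoke Nagumo's invariance criterion. Combined with the boundedness of $u$ and of trajectories, this upgrades the local solution to the entire interval $(-T^\star,T^\star)$, and uniqueness follows from the Lipschitz bound on $u$ via Gr\"onwall.

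Third, to obtain the spatial regularity asserted in the statement, I would differentiate the integral equation in $x$, so that the Jacobian satisfies the linearized system
\[
\partial_t D_x\Phi(t,x)=\nabla u(t,\Phi(t,x))\,D_x\Phi(t,x),\qquad D_x\Phi(0,x)=\Id.
\]
Boundedness of $\nabla u$ combined with Gr\"onwall yields a uniform $C^1$ bound for $\Phi(t,\cdot)$ on $I$. For the H\"older modulus of $D_x\Phi$, I would estimate $|D_x\Phi(t,x_1)-D_x\Phi(t,x_2)|$ directly from the linearized ODE, using the $C^{0,\alpha}$ regularity of $\nabla u$ together with the already established Lipschitz bound on $\Phi(t,\cdot)$, and close the estimate by Gr\"onwall to obtain a uniform $C^{1,\alpha}$ bound. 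Continuity of $t\mapsto\Phi(t,\cdot)$ with values in $C^{1,\alpha}(\Omega)$ then follows from the continuity of $t\mapsto u(t,\cdot)$ in $C^{1,\alpha}$ together with these uniform bounds, by a standard continuous-dependence argument.
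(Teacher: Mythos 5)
Your proof is correct and is essentially the paper's own approach: the paper offers no written argument for Proposition \ref{flow}, simply calling the construction of the flow ``classical'', and your Picard fixed point, flow-invariance of $\overline{\Omega}$ via the signed distance and Gr\"onwall (Nagumo), and propagation of $C^{1,\alpha}$ regularity through the variational equation constitute precisely the standard Cauchy--Lipschitz argument that this appeal refers to. You also correctly isolated and handled the one point that goes beyond the textbook whole-space setting, namely that the tangency condition $u\cdot n=0$ combined with the Lipschitz bound on $u$ keeps trajectories in $\overline{\Omega}$, so nothing is missing.
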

%

%Let us highlight that, as a consequence of the divergence free condition, at any time $t$ the flow $\Phi(t,\cdot)$ is a  volume and orientation preserving diffeomorphism  on $\Omega$. 
%

Our main result is that the flow map is actually analytic in time, if one assumes that the boundary of $\Omega$ is analytic. 
\begin{thm}\label{main}
Let $\Omega$ be a multiply connected analytic bounded domain in $\R^d$, $\alpha$ in $(0,1)$.
Let an initial data $y_0\in C^{1, \alpha}(\Omega)$, and the maximal positive time $T^\star>0$ be such that there is a unique solution $y\in C((-T^\star, T^\star); C^{1, \alpha}(\Omega))$ of \eqref{AHT}.
Let $\Phi$  be the corresponding flow map. 
Then $\Phi$ is analytic from $(-T, T)$ to $C^{1, \alpha}(\Omega)$ for some $T\leq T^\star$ depending on $y_0$ and $\Omega$. 
\end{thm}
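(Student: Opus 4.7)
The plan is to prove the desired time-analyticity of $\Phi$ by establishing, for some constants $M, L > 0$ depending only on $y_0$ and $\Omega$, estimates of the form
\[
\bigl\| \partial_t^k \Phi(0, \cdot) \bigr\| \le M L^k k!, \qquad k \ge 0.
\]
Such bounds are equivalent to $t \mapsto \Phi(t, \cdot)$ being real-analytic from a neighborhood of $0$ into $C^{1,\alpha}(\Omega)$. By translating time and using $y(t_0, \cdot)$ as a new initial datum, the same estimate at every $t_0$ in a compact subinterval of $(-T^\star, T^\star)$, with constants locally uniform in $t_0$, would then yield analyticity on $(-T, T)$ for some $T \le T^\star$.

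The structural starting point is the identity
\[
\partial_t \Phi(t,x) \,=\, u(t, \Phi(t,x)) \,=\, y_0(x) - \nabla p(t, \Phi(t,x)),
\]
obtained from $u = \mathbb{P} y = y - \nabla p$ combined with the transport relation $y(t, \Phi(t,x)) = y_0(x)$, the latter being the precise content of the AHT equation. Writing $D = \partial_t + u \cdot \nabla$ for the material derivative and using the composition rule $\partial_t[f(t, \Phi(t,x))] = (Df)(t, \Phi(t,x))$, one obtains by induction the clean recursion
\[
\partial_t^{k+1} \Phi(t, x) \,=\, -(D^k \nabla p)(t, \Phi(t,x)), \qquad k \ge 1,
\]
so that the analyticity of $\Phi$ is governed by the size of the iterated material derivatives of $\nabla p$ along the flow.

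To control these, I would pull the Neumann problem \eqref{def-P} back to the reference configuration: setting $q(t, \xi) = p(t, \Phi_t(\xi))$, the function $q$ satisfies an elliptic boundary-value problem on the fixed domain $\Omega$ whose coefficients depend polynomially on $D_\xi \Phi_t$ and $(D_\xi \Phi_t)^{-1}$, whose right-hand side depends on $y_0$ and $D_\xi \Phi_t$, and whose Neumann-type boundary data depend on $\Phi_t\vert_{\partial \Omega}$ and $y_0$. Differentiating this elliptic problem $k$ times in time produces a linear inhomogeneous elliptic problem for $\partial_t^k q$ with inhomogeneity polynomial in $\partial_t^j \Phi$ and $\partial_t^j D_\xi \Phi$ for $0 \le j \le k$ with combinatorial coefficients of Fa\`a di Bruno type. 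Combined with an elliptic regularity estimate in $C^{1,\alpha}$, this should reduce the analytic bound on $\partial_t^k \Phi$ to an analytic bound on the source of the elliptic problem, which one closes inductively.

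The main obstacle is quantitative. Standard elliptic regularity on $\Omega$ provides estimates $\|q\|_{C^{m+2, \alpha}} \le C_m \|(\text{data})\|_{C^{m, \alpha}}$, but the constants $C_m$ typically grow much faster than geometrically in $m$, so a naive iteration would yield only a Gevrey-type regularity of $\Phi$ in time rather than true analyticity. This is precisely where the assumption of an analytic boundary is crucial: it permits invoking an analytic elliptic regularity result for the Neumann Laplacian, in the spirit of Morrey--Nirenberg, which produces bounds whose constants grow only geometrically in the number of derivatives. Coupling this with the algebraic cancellation $Dy = 0$, which prevents the worst time-derivatives of $y$ from appearing in the recursion, should allow one to close the analytic estimate and conclude the theorem.
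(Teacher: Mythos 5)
Your starting point is sound and coincides with the paper's: the identity $\partial_t^{k+1}\Phi=(D^k u)\circ\Phi$ together with $Dy=0$ (so $D^k u=-D^k\nabla p$ for $k\ge 1$) reduces the problem to factorial-with-geometric-constant bounds on iterated material derivatives, and your Lagrangian pull-back of the Neumann problem \eqref{def-P} is a legitimate alternative to the paper's Eulerian route (it even sidesteps the circulation bookkeeping of Proposition \ref{P3}, since the scalar Neumann problem is well-posed in a multiply connected domain without prescribing circulations). But the plan has two genuine gaps. First, your diagnosis of the ``main obstacle'' and the proposed cure are both off. Morrey--Nirenberg analytic elliptic regularity is inapplicable here: the datum $y_0$ is merely $C^{1,\alpha}$, so $q(t,\cdot)$ has no spatial regularity beyond $C^{2,\alpha}$ at best, and the norms $\|q\|_{C^{m+2,\alpha}}$ you propose to iterate are simply infinite for $m\ge 1$. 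Any scheme in which the spatial order of the elliptic estimate grows with the number of time derivatives cannot even start at this regularity. The correct mechanism --- and the one the paper uses --- keeps every object at the \emph{fixed} spatial order $C^{1,\alpha}$, applying a single fixed-constant regularity estimate (the div/curl/normal-trace/circulation Lemma \ref{Lemme1}, estimate \eqref{fond}) at each step; the analyticity of $\partial\Omega$ enters not through analytic elliptic theory but through the factorial bounds $\|\nabla^s\rho_\Omega\|\le c_\rho^s\,s!$ of \eqref{RhoAnalytique} on the signed distance, which control the Fa\`a di Bruno expansion of the boundary term $n\cdot D^k u$ in Proposition \ref{P1norm}. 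In your Lagrangian setting the same need reappears, disguised, as analytic bounds on $\nabla^s n$ composed with $\Phi_t$ in the pulled-back conormal condition; invoking Morrey--Nirenberg does not address it. A further low-regularity friction specific to your route: with $\Phi_t\in C^{1,\alpha}$, the pulled-back operator $\div_\xi(A\nabla_\xi q)$ has only $C^{0,\alpha}$ coefficients in divergence form, which yields $q\in C^{1,\alpha}$, not the $C^{2,\alpha}$ control on $\nabla_\xi q$ your recursion requires; the paper's Eulerian lemma avoids exactly this.

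Second, even granting the structural setup, the entire quantitative core is asserted rather than proved. The paper's actual work consists of the recursion formulas with explicit coefficient bounds ($|c^1_k|,|c^3_k|,|c^4_k|\le k!/\alpha!$, $|c^2_k|\le k!/(\alpha!(s-1)!)$, $|c_{k,r}|\le\binom{k}{r}$ in Propositions \ref{P1}, \ref{P1norm}, \ref{P2}, \ref{P3}), the combinatorial summation via Chemin's Lemma \ref{LemmeChemin}, the weighted induction hypothesis \eqref{indu} with the crucial damping factor $(k+1)^{-2}$, and the choice of $L$ so that $\gamma(L)\le 1/c_{\mathfrak r}$ in \eqref{hyp_L}. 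Your phrase ``should allow one to close the analytic estimate'' is precisely where the theorem lives: without exhibiting that the Fa\`a di Bruno coefficients of your time-differentiated elliptic problem sum to $k!$ times a geometric factor, the argument establishes nothing beyond formal plausibility. A smaller but real slip: bounds $\|\partial_t^k\Phi(0,\cdot)\|\le ML^k k!$ at the single time $t=0$ are \emph{not} equivalent to analyticity near $0$ (Borel's theorem); you need the bounds uniformly on a time interval, as in \eqref{indu}, which your later remark about locally uniform constants implicitly concedes but your opening claim misstates.
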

%A few comments are in order. 
Theorem \ref{main} extends to the case of the AHT equations a result which was already known for the incompressible Euler equation. 
%Next section recalls the latter. 
%Comparison with the case of the incompressible Euler equation
We refer to \cite{Chemin92,Chemin, gamblinana, gamblinana2} ; and the many extensions; for results proving smoothness, analyticity or Gevrey regularity of the flow map and in particular to the case where the system is contained in a bounded domain: \cite{Kato, ogfstt}. Further results on the analyticity of the trajectories for several incompressible fluid models can be found in \cite{CVW,z,hernandez,inci,besse2020regularity}.  We refer also to Section \ref{section2} for more details on the Kato method.

The AHT equation is reminiscent of the 2D Euler equation in vorticity form: the transporting vector field $u$ is obtained by a pseudo-differential operator from the transported quantity. However, in the case of the AHT equation, this pseudo-differential operator is of order $-1$ while it is of order $0$ for the Euler equation. Furthermore, the transported quantity is a vector field rather than a scalar field. 

 The AHT system is also similar to the inviscid Burgers equation with an extra non-local feature, due to the Leray projection. 
 Since it is pseudo-differential operator of order zero, it is also reminiscent of the incompressible porous medium equation, also called IPM equation. More precisely, for any $\rho$ that satisfies the 2D IPM equation, the vector $y=-(0,\rho)$ is a solution to the AHT system in the 2D setting. 

\paragraph{Organization of the paper}
The paper is organized as follows. In Section 2 we present the main idea introduced by Kato in \cite{Kato}, allowing to prove the analyticity of the trajectories in the context of the Euler equation. We define in particular the main notations with regard to the $\div$, $\curl$, as well as commutator rules and recursive formulas for the $\div$ and the boundary estimates.
In Section 3 we explain the main strategy for proving Theorem \ref{main} by adapting the Kato method to the AHT system. Sections 4 and 5 are devoted to proving recursive formulas for the $\curl$ and circulations, respectively. In Section 6 we gather all the arguments to prove the main theorem.
Eventually, in Section \ref{section7} we discuss some remarks regarding the analyticity in time as well as open questions related to the AHT system.

\section{The Kato method for proving smoothness of the trajectories}\label{section2}

In \cite{Kato}, Kato proved smoothness in time of the trajectories of the flow associated to the solution of Euler equations 
\begin{equation} \label{euler}
\begin{aligned}
\partial_t u+u\cdot \nabla u + \nabla p&=0, \text{ on } \Omega\\
\div u &=0,\text{ on } \Omega\\
u\cdot n &=0, \text{ on } \partial \Omega
\end{aligned}
\end{equation}
 on a domain $\Omega$ corresponding either to $\R^m$ or to a bounded domain with smooth boundaries.
 The main idea is the observation that, by the definition of the flow in \eqref{def-flot} and the chain rule, one has for any $k$ in $\N$, 
\begin{equation} \label{chainrule}
\partial^{k +1 }_t \Phi(t,x)= D^k u (t, \Phi (t,x)),
\end{equation}
where $D$ denotes the material derivative
\begin{equation*}
D := \partial_{t} + u \cdot \nabla .
\end{equation*}
Hence, the problem aims at proving smoothness of the successive material derivatives of the velocity $ D^k u$. The main strategy used by Kato can be summarized in a two-step method.
\begin{itemize}
\item Compute recursive formulas for $\div D^k u$ and $\curl D^k u$ in terms of $\nabla D^p u$ with $p < k$, assuming inductively that the latter exist.
\item Deduce an estimate on $D^k u$ itself using elliptic regularity lemmas involving $\div D^k u$ and $\curl D^k u$.
\end{itemize} 
As explained by Kato, this method can be adapted to the case of a bounded domain by considering further information on the regularity of $D^ku$ on the boundary. See Lemma \ref{Lemme1} for more details. 

Still in the case of Euler equation and motivated by Gamblin's results \cite{gamblinana,gamblinana2}, the combinatorics in Kato approach have been refined in \cite{ogfstt} to obtain the analyticity of the flow by proving recursive estimates on $D^k u$. 
Let us also refer to  \cite{Sueur} for a similar result regarding the Gevrey regularity
of  Yudovich type solutions  to the $2$D incompressible Euler equations in a bounded domain. 

We  describe  this strategy below. 

\subsection{Commutator rules and main notations.} First, we perform the computations for $k=1$, one has  
\begin{equation}\label{dbase} 
\left\{\begin{aligned}
\div Du &=   \trace\left\{\nabla u  \,\nabla u \right\}  ,
\\ \curl Du &= 0 .
\end{aligned}
\right.\end{equation}
where we used equation \eqref{euler} for the second line and only the fact that $u$ is divergence free for the first line.
Above and throughout the paper, we use the convention that the curl of a vector-valued function in $\R^d$ is the skew-symmetric part of its gradient, that is, a $d\times d $ square matrix rather than a vector. 

Observe in particular that the operator $D$ has disappeared from the right-hand side, and if we consider the case where the equation is set in the full space with appropriate decay at infinity then one can deduce that $Du$ has the same regularity as $u$, since the derivative which is consumed in the r.h.s. of the divergence equation is compensated by the gain of one derivative in solving the div/curl system. 

This analysis can be continued by considering, iteratively, the higher-order derivatives $D^k$. This relies on the following commutator rules to exchange $D$ and differentiations with respect to $x$. The first is valid for any scalar field  $\psi$ defined in the fluid domain:
 \begin{eqnarray}
	\nabla (D\psi) -D(\nabla \psi) = (\nabla u)^\top \cdot (\nabla \psi), \label{t3.1}
 \end{eqnarray}
 If $\psi$is a  vector field defined in the fluid domain:
 \begin{eqnarray}
 \nabla (D\psi) -D(\nabla \psi) =(\nabla \psi)\cdot (\nabla u), \label{t3.2}\\
	\div D\psi - D\div \psi = \trace \left\{(\nabla \psi)\cdot (\nabla u) \right\}, \label{t3.3}\\
	\curl D\psi - D\curl \psi = \as \left\{(\nabla \psi)\cdot (\nabla u)  \right\}. \label{t3.4}
\end{eqnarray}
here $\trace\{A\} $ is the trace 
of $A \in {\mathcal M}_{3}(\R)$ and  $\as\{A\}:=A-A^*$ the antisymmetric part of $A \in {\mathcal M}_{3}(\R)$. We recall that the curl is defined as a square matrix rather than a vector.

We introduce for $s \in \N^*$ and $ \alpha := ( \alpha_1,\ldots, \alpha_s )  \in \N^s$ the notations $$ | \alpha | : = \alpha _1 + \ldots + \alpha _s, \quad \alpha! : = \alpha _1! \ldots \alpha _s!$$
We recall the following formal identity for divergence operator that can be found in \cite[Proposition 2]{ogfstt}.
\begin{prop}\label{P1}
For $k \in \N^*$, for any smooth vector divergence free vector field $u$ in $\overline \Omega$
\begin{eqnarray}\label{P1f}
\div D^k u=      \trace\left\{
\underset{s=2}{\overset{k+1}{\sum}} \underset{\underset{|\alpha|=k+1-s}{\alpha \in \N^s }}{\sum}
c^1_k (s,\alpha  )  \,  D^{\alpha_1}\nabla  u \cdot \ldots  \cdot D^{\alpha_s} \nabla u \right\} 
\end{eqnarray}
where  $c^1_k (s,\alpha )$ are integers satisfying
\begin{equation} \label{Ci:1}
|c^1_k (s,\alpha) | \leqslant \frac{k ! }{\alpha ! } .
\end{equation}
\end{prop}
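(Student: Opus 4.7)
The natural approach is induction on $k$, using the commutator rules \eqref{t3.3}--\eqref{t3.2} to propagate both the shape of the identity and the coefficient bound. For the base case $k=1$, the first line of \eqref{dbase} reads $\div Du = \trace\{\nabla u\cdot\nabla u\}$, which is exactly the announced formula with $s=2$, $\alpha=(0,0)$, and $c^1_1(2,(0,0))=1=1!/(0!\,0!)$.

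For the inductive step, I would apply \eqref{t3.3} with $\psi=D^k u$ to obtain
$$\div D^{k+1}u \;=\; D\,\div D^k u \;+\; \trace\bigl\{(\nabla D^k u)\cdot (\nabla u)\bigr\}.$$
On the first summand I inject the induction hypothesis and distribute the derivation $D=\partial_t+u\cdot\nabla$ via the Leibniz rule: each of the $s$ branches increments a single $\alpha_j$ by $1$, producing a trace of a product of $s$ factors whose multi-indices sum to $(k+1)+1-s$, i.e.\ the correct shape at order $k+1$.

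To handle the second summand I would first establish, by an entirely parallel induction using \eqref{t3.2}, an auxiliary expansion
$$\nabla D^k u \;=\; \sum_{s=1}^{k+1}\ \sum_{\substack{\alpha\in\N^s\\ |\alpha|=k+1-s}} c^0_k(s,\alpha)\, D^{\alpha_1}\nabla u\cdots D^{\alpha_s}\nabla u, \qquad |c^0_k(s,\alpha)|\leq k!/\alpha!.$$
Right-multiplying by $\nabla u$ and taking the trace appends an entry $\alpha_{s+1}=0$ and raises $s$ by one, so this contribution also lands in the shape announced for $k+1$.

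The main obstacle is the coefficient estimate. After cyclic rearrangement of the trace, the new coefficient $c^1_{k+1}(s,\alpha)$ at a given multi-index is (up to signs) a sum, over those $j$ with $\alpha_j\geq 1$, of $c^1_k(s,\alpha-e_j)$, plus a single auxiliary term of the form $c^0_k(s-1,\alpha')$ coming from the trace term when one entry of $\alpha$ is zero. Using the inductive bounds and the elementary identity
$$\sum_{j:\,\alpha_j\geq 1}\frac{\alpha_j}{\alpha!}+\frac{1}{\alpha!}=\frac{|\alpha|+1}{\alpha!},$$
one obtains $|c^1_{k+1}(s,\alpha)|\leq k!\,(|\alpha|+1)/\alpha!$, and since $|\alpha|+1=k+3-s\leq k+1$ for $s\geq 2$, the bound $(k+1)!/\alpha!$ follows. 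The delicate point is really the combinatorial bookkeeping: correctly grouping every monomial produced by Leibniz and by the auxiliary expansion under a single $(s,\alpha)$ index, modulo the cyclicity of the trace. Tracking only absolute values (as the statement does) spares one from any sign analysis and keeps the induction manageable.
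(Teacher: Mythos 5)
Your proof is correct. Note first that the paper itself contains no proof of Proposition \ref{P1}: it is recalled verbatim from \cite[Proposition 2]{ogfstt}. Your induction reconstructs that argument, and it follows exactly the scheme the paper does spell out for the curl analogue (Proposition \ref{P2}): apply the commutator rule \eqref{t3.3} to $\psi=D^k u$, push $D$ through the trace by Leibniz (contributions $c^1_k(s,\alpha-e_j)$, summing to $|\alpha|\,k!/\alpha!$), absorb the commutator term $\trace\{(\nabla D^k u)\cdot(\nabla u)\}$, and close with the count $|\alpha|+1=k+3-s\leq k+1$ for $s\geq 2$, which is precisely the bookkeeping of Section 4. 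The one structural point specific to Proposition \ref{P1} is that its factors are $D^{\alpha_i}\nabla u$ rather than $\nabla D^{\alpha_i}u$ (the form used in Proposition \ref{P2}), which is why your auxiliary expansion of $\nabla D^k u$ is genuinely needed; this companion formula is legitimate, and in \cite{ogfstt} the identities for $\nabla D^k u$ and $\div D^k u$ are indeed established simultaneously by the same induction. Two small remarks. First, the cyclic rearrangement of the trace you invoke is unnecessary: the factor $\nabla u$ produced by \eqref{t3.2} is appended in a fixed (last) slot, so you may index monomials by ordered tuples and never identify cyclic rotations. Second, in the auxiliary expansion you should flag the stratum $s=1$, where the generic bound $k!(|\alpha|+1)/\alpha!$ with $|\alpha|+1=k+2$ would overshoot $(k+1)!/\alpha!$; it is saved because no appended-$\nabla u$ term can land in $s=1$, so the only contribution is the single Leibniz term $c^0_k(1,(k))$ and the bound holds (with equality). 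Neither point is a gap, only bookkeeping worth one line each.
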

Observe that the above proposition only assumes that the vector field $u$ is divergence-free, while the first equation of \eqref{euler} is not required. 
Moreover, the r.h.s. of \eqref{P1f} only involves some iterated derivatives $D^j u$ with $j < k$. 

A similar identity can be obtained for $\curl D^k u$ when $u$ is a solution of 
 both equations in \eqref{euler}.
\subsection{Case of a bounded domain (regularity lemma)}
In the case where the system is set on a bounded domain, the inversion of the div/curl system requires some boundary conditions. 
Indeed we recall the following regularity result, see \cite[Lemma 1]{ogfstt} 
 where $(\Gamma_i)_i$, for  $(i=1,\ldots,g)$, is a family of smooth oriented loops which generates a basis of the first singular homology space of $\Omega$ with real coefficients. These are  in particular  some smooth closed curves in $\Omega$. Let $\Pi$ be the mapping defined by 
$$u\mapsto \left(\oint_{\Gamma_1} u\cdot \tau d\sigma,\ldots, \oint_{\Gamma_g} u\cdot \tau d\sigma\right),$$
with $\tau$ denoting the tangent vector.
\begin{lemme}[Regularity]\label{Lemme1}
There exists a constant $c_{\mathfrak r}$ depending only on $\Omega$ and $\Gamma_i$ for $1\leq i\leq g$ such that for any $u \in C^{0,\alpha}(\Omega)$ such that
$$
\div  u\in C^{0,\alpha}(\Omega), \quad \curl  u \in C^{0,\alpha}(\Omega), \quad  u\cdot n \in C^{1,\alpha}(\partial \Omega),
$$
one has $ u\in C^{1,\alpha}(\Omega)$ and 
\begin{equation}
\label{reg1}
\| u\| \leq c_{\mathfrak r} \left(|\div  u| + |\curl  u| + \| u\cdot n\|_{\partial \Omega} + |\Pi  u|_{\R^{g}} \right).
\end{equation}
\end{lemme}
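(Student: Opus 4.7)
The plan is to establish the estimate by a Hodge-type decomposition
$$u = \nabla p + h + w,$$
in which each summand is controlled by one of the four data $\div u$, $\curl u$, $u\cdot n$, $\Pi u$, together with Schauder-type elliptic regularity for each building block.

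\textbf{Step 1 (gradient part).} First I would solve the Neumann problem
$$\Delta p = \div u \text{ in } \Omega, \qquad \partial_n p = u\cdot n \text{ on } \partial\Omega,$$
normalised by $\int_\Omega p = 0$. The compatibility identity $\int_\Omega \div u = \int_{\partial\Omega} u\cdot n$ holds automatically by the divergence theorem applied to $u$. Standard Schauder theory for the Neumann Laplacian on a smooth domain then yields $p\in C^{2,\alpha}(\overline{\Omega})$ together with
$$\|\nabla p\| \leq c\bigl(|\div u| + \|u\cdot n\|_{\partial\Omega}\bigr).$$
Setting $v := u - \nabla p$, one obtains $\div v = 0$ and $v\cdot n = 0$ on $\partial\Omega$, while $\curl v = \curl u$ and $\Pi v = \Pi u$; the last equality holds because $p$ is single-valued, so $\oint_{\Gamma_i}\nabla p\cdot\tau\,d\sigma = 0$ for every closed loop $\Gamma_i$.

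\textbf{Step 2 (harmonic/topological part).} The space
$$\mathcal H(\Omega) := \{h\in C^\infty(\overline{\Omega};\R^d) : \div h = 0,\ \curl h = 0,\ h\cdot n|_{\partial\Omega} = 0\}$$
of Neumann harmonic fields on a smooth multiply connected domain is a finite-dimensional vector space, of dimension equal to the first Betti number $g$, on which the period map $\Pi : \mathcal H(\Omega) \to \R^g$ acts as an isomorphism; this is the classical Hodge--Morrey--Friedrichs theory. Let $h\in\mathcal H(\Omega)$ be the unique element with $\Pi h = \Pi v = \Pi u$; by finite-dimensionality one has $\|h\| \leq c\,|\Pi u|_{\R^{g}}$. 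Then $w := v - h$ satisfies $\div w = 0$, $\curl w = \curl u$, $w\cdot n = 0$ on $\partial\Omega$, and $\Pi w = 0$.

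\textbf{Step 3 (curl part) and main obstacle.} It remains to control $\|w\|$ by $|\curl u|$ alone. I would represent $w$ via a vector potential: in a suitable gauge one writes $w = \curl A$ and, using $-\Delta = \curl\curl - \nabla\div$, the potential $A$ solves an elliptic boundary value problem for $-\Delta A$ with right-hand side $\curl w = \curl u$; Schauder estimates then give $\|A\|_{C^{2,\alpha}} \leq c|\curl u|$, hence $\|w\| \leq c|\curl u|$. The condition $\Pi w = 0$ extracted in Step 2 is exactly what yields uniqueness of $w$ in the chosen gauge and hence validity of the a priori estimate, since on divergence-free fields tangent to the boundary the kernel of $\curl$ is $\mathcal H(\Omega)$ and $\Pi|_{\mathcal H(\Omega)}$ is injective. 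The main technical difficulty lies here: in dimension two, $A$ reduces to a scalar stream function solving a Dirichlet problem and the Schauder estimate is immediate, but in general dimension the gauge and boundary conditions for $A$ have to be chosen so that the resulting elliptic system satisfies the Lopatinskii--Shapiro complementing condition up to the boundary; this is the reason why the topological contribution $h$ is split off beforehand. Summing the three contributions yields the announced inequality.
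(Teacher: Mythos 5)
The paper itself does not prove this lemma: it is imported verbatim from \cite{ogfstt} (Lemma 1 there), so the comparison is with the classical argument underlying that citation, and your proposal is essentially that argument. The Hodge--Morrey--Friedrichs splitting $u=\nabla p+h+w$, with the Neumann problem absorbing $\div u$ and $u\cdot n$ (Schauder for the Neumann Laplacian with $C^{0,\alpha}$ interior data and $C^{1,\alpha}$ boundary data does give $p\in C^{2,\alpha}$), the $g$-dimensional space of tangent harmonic fields absorbing the circulations (the period map $\Pi$ is indeed an isomorphism there, and finite-dimensionality yields $\|h\|\le c\,|\Pi u|_{\R^g}$), and a vector potential handling $\curl u$, is the standard route to estimates of the type \eqref{reg1} (Bolik--von Wahl-type div--curl estimates in H\"older spaces). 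Your identification of $\Pi w=0$ as the reason the kernel is trivial in the last step is also correct.

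Two points in Step 3 deserve sharpening, since that is where the real content lies. First, the existence of the potential $A$ on a multiply connected domain is not produced by ``a suitable gauge'': the obstruction lives in $H^{d-1}(\Omega)$, i.e.\ in the fluxes of $w$ through hypersurfaces enclosing the cavities of $\Omega$, and it vanishes here because $\div w=0$ in $\Omega$ and $w\cdot n=0$ on $\partial\Omega$ force zero flux through any such surface (apply the divergence theorem in the region between the surface and the boundary components it encloses). The condition $\Pi w=0$, which concerns $H^1(\Omega)$, plays no role in existence, only in uniqueness --- your text respects this distinction but omits the flux argument, without which ``$w=\curl A$'' is unjustified. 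Second, Schauder theory for the boundary value problem you set up yields at first $\|A\|_{C^{2,\alpha}}\le c\left(|\curl u|+\|A\|_{C^0}\right)$, and removing the lower-order term requires the usual compactness/uniqueness argument resting on the trivial kernel you identified; together with the verification of the Lopatinskii--Shapiro condition for the chosen boundary conditions on $A$ (which you explicitly defer), this is precisely what the cited references carry out. So your proposal is a correct reduction to classical elliptic-system facts, matching the proof behind the paper's citation, rather than a fully self-contained proof.
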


The idea is then to apply Lemma \ref{Lemme1} to $D^{k} u$ to get 
\begin{equation} \label{fond}
\| D^{k} u \| \leq c_{\mathfrak r} \left(| \div D^k u | + | \curl D^k u| +  \| D^{k} u\cdot n \|_{\partial \Omega}+ | \Pi D^k u |_{\R^{g}} \right) .
\end{equation}
Hence one needs to estimate each of the four terms appearing in the above right hand side. 
The assumption on the analyticity of the boundary shows up to handle the term 
$D^{k} u\cdot n $. 
Indeed, if the boundary $\partial \Omega$ is analytic, then there is   a function $\rho_{\Omega}$
 defined on a neighborhood of $\partial \Omega$ which is equal to  the
signed distance to $\partial \Omega$, negative inside $\Omega$, in a smaller neighborhood of $\partial \Omega$ and such that  there exists
$c_\rho > 1$ such  that for all $s \in \mathbb{N}$, 
\begin{equation} \label{RhoAnalytique}
\|\nabla^s \rho_{\Omega} \| \leq c_{\rho}^s \, s! .
\end{equation}
Moreover there are some recursive  identities to obtain the terms 
$D^{k} u\cdot n $, if one assumes that $u$ is a  smooth tangent  vector field  in $\overline \Omega$, see  \cite[Proposition 2]{ogfstt}.
\begin{prop}\label{P1norm}
For $k \in \N^*$, for any 
smooth tangent  vector field  in $\overline \Omega$, we have 
on the boundary $ \partial \Omega$:
\begin{eqnarray}\label{P4f}
n\cdot D^k u = \underset{s=2}{\overset{k+1}{\sum}} \underset{\underset{|\alpha|=k+1-s}{\alpha \in \N^s }}{\sum}  c^2_k (s,\alpha )  \,\nabla^s \rho_{\Omega} \{ D^{\alpha_1} u , \ldots  ,D^{\alpha_s} u \} ,
\end{eqnarray}
where  the $ c^2_k (s,\alpha)  $ are negative integers satisfying
\begin{equation} \label{Ci:2}
| c^2_k (s,\alpha)  | \leqslant \frac{k ! }{\alpha ! (s-1)!}.
\end{equation}
\end{prop}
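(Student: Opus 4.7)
The identity is driven by the tangency of $u$ to $\partial\Omega$. Since $u \cdot n = 0$ on $\partial\Omega$, the flow leaves the boundary invariant: for any $x_0 \in \partial\Omega$, the curve $t \mapsto \Phi(t, x_0)$ stays in $\partial\Omega$, and therefore $\rho_\Omega(\Phi(t, x_0)) \equiv 0$. Differentiating this identity $(k+1)$ times in $t$ and evaluating at $t = 0$ produces a scalar relation on $\partial\Omega$ (using $\nabla\rho_\Omega = n$ there). The whole game is to carry this differentiation out explicitly, single out the $n \cdot D^k u$ contribution, and keep track of all other combinatorial coefficients.

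The computation is organised around the auxiliary quantity $R_k(t, x) := \partial_t^k[\rho_\Omega(\Phi(t, x))]$. Two building blocks are essential: $\partial_t \Phi = u \circ \Phi$, which by induction yields $\partial_t^{j+1}\Phi = D^j u \circ \Phi$, and $\partial_t[f(t, \Phi(t, x))] = (Df)(t, \Phi(t, x))$ for any smooth field $f$. A straightforward induction on $k$ based on these two facts then produces a representation
\begin{equation*}
R_k(t, x) = \sum_{s=1}^{k} \sum_{\substack{\alpha \in \N^s \\ |\alpha| = k-s}} c(k;s,\alpha)\, \nabla^s\rho_\Omega(\Phi)\{D^{\alpha_1}u, \ldots, D^{\alpha_s}u\}\big|_{(t,\Phi(t,x))},
\end{equation*}
where the non-negative integers $c(k;s,\alpha)$ obey the following recursion: applying $\partial_t$ to a summand produces one term where the new factor $u$ (arising from $\partial_t\Phi$) is inserted by convention in the first slot of $\nabla^{s+1}\rho_\Omega$, and one term for each slot $i$ in which $D^{\alpha_i}u$ is replaced by $D^{\alpha_i+1}u$; this gives
\begin{equation*}
c(k+1;s,\alpha) = [\alpha_1{=}0]\, c(k;\,s-1,\,(\alpha_2,\ldots,\alpha_s)) + \sum_{i:\alpha_i \geq 1} c(k;\,s,\,(\alpha_1,\ldots,\alpha_i-1,\ldots,\alpha_s)),
\end{equation*}
with base case $c(1;1,(0)) = 1$. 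Following the ``stay at $s=1$'' branch yields $c(k+1;1,(k)) = 1$, so that the $s=1$ term in $R_{k+1}$ is exactly $\nabla\rho_\Omega(\Phi) \cdot D^k u$. Since $R_{k+1} \equiv 0$ on $\partial\Omega$, isolating this contribution at $t = 0$ gives the stated identity with $c^2_k(s,\alpha) = -c(k+1;s,\alpha) \in -\N$, hence a non-positive integer.

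The remaining task is the bound $|c^2_k(s,\alpha)| \le k!/(\alpha!(s-1)!)$, which I would prove by induction on $k$ directly from the recursion. Assuming the estimate at level $k$, the second sum is bounded, using $(\alpha_i-1)! = \alpha_i!/\alpha_i$ and $\sum_i \alpha_i = |\alpha| = k+1-s$, by $(k-1)!(k+1-s)/(\alpha!(s-1)!)$; when $\alpha_1 = 0$ the first term adds $(k-1)!(s-1)/(\alpha!(s-1)!)$ after rewriting $(s-2)! = (s-1)!/(s-1)$, and the two pieces sum to precisely $k!/(\alpha!(s-1)!)$, while when $\alpha_1 \ge 1$ only the second sum is present and $(k-1)!(k+1-s) \le k!$ suffices. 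The only delicate point in the whole argument is choosing the insertion convention in the recursion so that the symmetry breaking matches the $(s-1)!$ denominator of the prescribed bound; any other sensible convention still gives an integer representative, but the specific ``new $u$ first'' rule is what makes the telescoping $(s-1) + (k+1-s) = k$ appear in the induction.
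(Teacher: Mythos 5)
Your proof is correct, and it is essentially the paper's own route: the paper does not prove Proposition \ref{P1norm} but imports it from \cite[Proposition 2]{ogfstt}, and your argument --- differentiating $\rho_\Omega(\Phi(t,x))\equiv 0$ repeatedly in time using $\partial_t^{j+1}\Phi=(D^j u)\circ\Phi$ and $D\nabla^s\rho_\Omega=\nabla^{s+1}\rho_\Omega\{u,\cdot\}$, isolating the $\nabla\rho_\Omega\{D^k u\}=n\cdot D^k u$ term, and bounding the bookkeeping coefficients via the recursion $c(k+1;s,\alpha)=[\alpha_1{=}0]\,c(k;s-1,(\alpha_2,\dots,\alpha_s))+\sum_{i}c(k;s,\alpha-e_i)$, whose two pieces telescope as $(k+1-s)+(s-1)=k$ --- is exactly the argument of that reference, including the deliberate asymmetric insertion convention needed to match the $(s-1)!$ in the bound. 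The one cosmetic point is that evaluating at $t=0$ only yields the identity at the initial time; but since $R_{k+1}\equiv 0$ holds for all $t$ and the flow preserves $\partial\Omega$, composing with $\Phi(t,\cdot)^{-1}$ (or restarting the flow at an arbitrary time) gives the identity at every time, as your setup already implicitly contains.
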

In the case where the vector field $u$ satisfies Euler equation \eqref{euler}, there exists in addition a formula for the circulations $\Pi D^k u$ which allows to conclude using the  regularity lemma \ref{Lemme1} combined with a recursive argument. Let us explain in the next section how to adapt such an argument for the AHT system.

\section{Scheme of proof of Theorem \ref{main}: Kato's method for the AHT system.}

According to the previous Section, we emphasize that Kato's strategy is not intrinsic to Euler equation and can be adapted to any context provided that one can extract suitable formulas for the $\div$, $ \curl$ operators as well as for the boundary terms. 

For the equation \eqref{AHT}, it may look unclear that considering a div/curl system is relevant.
However, the velocity $u$ associated with a solution to \eqref{AHT} is also divergence-free, and this is all that we need to apply \eqref{P1f} 
as we stressed above. Similarly, such an observation holds for the boundary term since $u$ is also tangent to the boundary, see Proposition \ref{P1norm} where no information coming from the equation is needed. Let us describe how to handle the two remaining terms: rotational and circulations using the \eqref{AHT} equation.

\paragraph{Rotational.}
Using \eqref{t3.4} we have
    $$
    \curl D u = D \curl u +\as \left\{(\nabla u)\cdot (\nabla u)  \right\} .
    $$
  On the other hand, thanks to \eqref{def_Leray_projection},  observe that 
  $$\curl u = \curl y ,$$
  hence, applying the curl operator to the AHT equation \eqref{AHT} together with \eqref{t3.4}, one gets
$$
D \curl y= - \as  \left\{(\nabla y)\cdot (\nabla u)  \right\} .
$$
 Combining the three last equalities we arrive at 
 \begin{equation}\label{ibis}
    \curl Du= -as \left\{ \nabla y\cdot \nabla u \right\} +as \left\{ \nabla u \cdot \nabla u \right\} . 
\end{equation}
Therefore, there is no loss of derivative from $y$ and $u$ to $Du$.

\paragraph{Circulations.}
  First we observe that  
\begin{equation} \label{sam1}
    Du =  D \mathbb{P}y =  \mathbb{P} Dy - [\mathbb{P}, u \cdot \nabla]y = - [\mathbb{P},u\cdot \nabla] y ,
\end{equation} 
since $ Dy = 0$  by  \eqref{AHT}. 
Moreover, by definition of the Leray projector, we have 
\begin{equation}\label{dec-1}
y =u+\nabla p,
\end{equation}
where $u =\P y$ and $p$ solves
\begin{equation}\label{cmt:elliptic11}
\begin{cases}
\Delta p= \div y\quad\text{in}~\Omega,\\
\frac{\partial p}{\partial n}=y\cdot n\quad\text{on}~\partial\Omega.
\end{cases}
\end{equation}
Similarly, we write
\begin{equation}\label{dec-2}
(u\cdot \nabla)y=\P((u\cdot \nabla)y)+\nabla g,
\end{equation}
with $g$ is the unique solution to the following Neumann problem:
\begin{equation}\label{cmt:elliptic2}
\begin{cases}
\Delta g= \div((u\cdot \nabla)y) \quad\text{in}~\Omega,\\
\frac{\partial g}{\partial n}=(u\cdot \nabla)y\cdot n\quad\text{on}~\partial\Omega.
\end{cases}
\end{equation}
%
%given by \eqref{cmt:elliptic2} 

Combining \eqref{dec-1} and \eqref{dec-2}, we have
\begin{equation}\label{comm}\begin{aligned}
~[\P, u\cdot\nabla]y&=\P((u\cdot \nabla) y)-(u\cdot\nabla)\P y\\
&=(u\cdot \nabla)y-\nabla g-(u\cdot \nabla)(y- \nabla p)\\
&=(u\cdot \nabla)(\nabla p)-\nabla g \\
&=\nabla(u\cdot \nabla p-g)- (\nabla u)^\top \nabla p \\
&=\nabla[u\cdot (y-u) -g]- (\nabla u)^\top (y-u) . \\
%&=-\nabla P - (\nabla u)^\top (y-u) , 
%(y-u) .
%(\nabla u_k) \partial_k p \\
\end{aligned}
\end{equation}
Combining \eqref{sam1} and \eqref{comm}, we arrive at 
\begin{equation}\label{eq:Du}
    Du=  \nabla P + (\nabla u)^\top (y-u) ,
    \end{equation}
where
\begin{equation} \label{def-g}
    P =g-u\cdot (y-u) .
\end{equation}

\section{Recursive formulas for the rotational}\label{section3}

We now turn to the recursive formulas for the $\curl$. 
\begin{prop}\label{P2}
For any smooth solution $y$
of \eqref{AHT} in $\overline \Omega$,  for $k\geq 1$
\begin{equation}\label{eq:formule_rec_curl}  \curl D^k u  =\as\left(  \sum_{s=1}^k \sum_{\underset{| \alpha |    = k - s}{\alpha \in \N^s}} c^3_{k} (s,\alpha )  \, \nabla y\cdot \nabla D^{\alpha_1} u \cdot \ldots  \cdot \nabla D^{\alpha_s} u   
+
\underset{s=2}{\overset{k+1}{\sum}} \underset{\underset{|\alpha|=k+1-s}{\alpha \in \N^s }}{\sum}  c_{k}^4(s,\alpha)  \, \nabla D^{\alpha_1} u \cdot \ldots  \cdot \nabla D^{\alpha_s} u
\right\}
\end{equation}
with
$$
|c^3_{k} (s,\alpha )| \leq \frac{k!}{\alpha!},\quad |c^4_{k} (s,\alpha  )| \leq \frac{k!}{\alpha!} .
$$
\end{prop}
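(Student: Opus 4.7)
The natural approach is a proof by induction on $k$, modelled on the proof of Proposition \ref{P1} (and its boundary analogue Proposition \ref{P1norm}) from \cite{ogfstt}, with the extra ingredient specific to \eqref{AHT} being the identity $Dy=0$ which keeps factors of $\nabla y$ under control. The base case $k=1$ is exactly identity \eqref{ibis}: taking $s=1$, $\alpha=(0)$ gives the term $-\as\{\nabla y\cdot\nabla u\}$ in the first sum (so $c^3_1(1,(0))=-1$), and taking $s=2$, $\alpha=(0,0)$ gives $\as\{\nabla u\cdot\nabla u\}$ in the second sum (so $c^4_1(2,(0,0))=1$). Both satisfy the claimed bound, and the total number of derivatives matches since $|\alpha|+s=k$ in the first sum and $|\alpha|+s=k+1$ in the second.

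For the inductive step, I would write $\curl D^{k+1}u = D(\curl D^k u) + \as\{\nabla D^k u\cdot\nabla u\}$ using the commutator identity \eqref{t3.4}, substitute the inductive formula \eqref{eq:formule_rec_curl} for $\curl D^k u$, and expand the action of $D$ by the Leibniz rule. The extra term $\as\{\nabla D^k u\cdot\nabla u\}$ is already of the correct form (second sum, $s=2$, $\alpha=(k,0)$), so it contributes directly to $c^4_{k+1}$. When $D$ hits a factor $\nabla D^{\alpha_i} u$, identity \eqref{t3.2} gives
\begin{equation*}
D(\nabla D^{\alpha_i} u) = \nabla D^{\alpha_i+1} u - (\nabla D^{\alpha_i} u)\cdot(\nabla u),
\end{equation*}
producing either a term where the multi-index $\alpha$ has one entry incremented by $1$, or a term with one additional factor of $\nabla u$ (which increases $s$ by $1$ while preserving $|\alpha|+s$ shifted by $1$). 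When $D$ hits the distinguished factor $\nabla y$, the identity $Dy=0$ combined with \eqref{t3.2} yields $D(\nabla y) = -(\nabla y)\cdot(\nabla u)$, which remains in the first sum with one extra $\nabla u$ factor; crucially no $\nabla Dy$ terms appear, so $\nabla y$ never acquires material derivatives. A similar accounting applies to the second sum, where $D$ acting on a pure product of $\nabla D^{\alpha_i}u$ factors stays in the second sum.

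The main obstacle, as in the analogous proofs in \cite{ogfstt}, is the combinatorial bookkeeping to show that the new coefficients $c^3_{k+1}$ and $c^4_{k+1}$ still satisfy the bounds $(k+1)!/\alpha!$. Each new coefficient is a finite sum of contributions coming from the different Leibniz terms; for a fixed $(s,\alpha')$ at level $k+1$, the contributions come from indices $(s,\alpha)$ with some $\alpha_i$ incremented, and from $(s-1,\widetilde\alpha)$ after inserting an extra factor of $\nabla u$. Using the inductive bound $k!/\alpha!$, one checks that the sum is bounded by
\begin{equation*}
\sum_{i} \frac{k!}{\alpha_1!\cdots(\alpha_i+1)! \cdots \alpha_s!}\cdot (\alpha_i+1) + (\text{rearrangement terms}) \leq \frac{(k+1)!}{\alpha'!},
\end{equation*}
exactly as in the derivation of \eqref{Ci:1}; the presence of the transported factor $\nabla y$ in the first sum does not alter this structure since $\nabla y$ is inert under $D$ up to multiplication by $\nabla u$. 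Once these estimates are in place, the inductive step closes and the formula is established.
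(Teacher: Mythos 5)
Your proposal is correct and takes essentially the same route as the paper's own proof: induction on $k$ using the commutator identity \eqref{t3.4}, with the base case given by \eqref{ibis}, the observation that $Dy=0$ makes $\nabla y$ produce only an extra $\nabla u$ factor under $D$, and the same coefficient bookkeeping (the paper spells out the one case you compress, namely $c^4_{k+1}(2,\alpha)$ where the extra term $\as\{\nabla D^k u\cdot\nabla u\}$ adds $+1$, absorbed via $\alpha!\leq k!$).
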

\begin{proof}
    For $k=1 $, it is a consequence of \eqref{ibis}.
    We argue then by induction for both $c_{k+1}^3$ and $c_{k+1}^4$. Assume that \eqref{eq:formule_rec_curl} is valid up to $k$. We have using \eqref{t3.4}
    \begin{align}\label{eq:calcul_rec_curl}
         \curl D^{k+1} u&= \curl D (D^k u) \notag\\
         &= D(\curl D^k u )+ \as [\nabla D^k u \cdot \nabla u ] \notag \\
         &=D \as\left\{  \sum_{s=1}^k \sum_{| \alpha |    = k - s} c^3_{k} (s,\alpha )  \, \nabla y\cdot \nabla D^{\alpha_1} u \cdot \ldots  \cdot \nabla D^{\alpha_s} u   
+
\underset{s=2}{\overset{k+1}{\sum}} \underset{\underset{|\alpha|=k+1-s}{\alpha \in \N^s }}{\sum}  c_{k}^4(s,\alpha  )  \, \nabla D^{\alpha_1} u \cdot \ldots  \cdot \nabla D^{\alpha_s} u
\right\} \notag \\
&+ \as [\nabla D^k u \cdot \nabla u ] .
    \end{align}

        \paragraph{Estimate of $c_{k+1}^3 (s,\alpha)$.} 
  In \eqref{eq:calcul_rec_curl}, when applying the material derivative $D$ to a term of type $\nabla y\cdot \nabla D^{\alpha_1} u \cdot \ldots  \cdot \nabla D^{\alpha_s} u $ and using \eqref{t3.2} we get two possible terms:
    \begin{enumerate}[label=\roman*)]
        \item either $\nabla y\cdot \nabla D^{\alpha_1} u \cdot \ldots  \nabla D^{\alpha_i+1} u\ldots  \cdot \nabla D^{\alpha_s} u $  for some $i\in \{1,\cdots,s\}$ 
        \item  or $\nabla y\cdot \nabla D^{\alpha_1} u \cdot \ldots  \nabla D^{\alpha_i} u \cdot \nabla u\cdot \nabla D^{\alpha_{i+1}} u \ldots  \cdot \nabla D^{\alpha_s} u $ for some $i\in \{0,\cdots,s-1\}$ with the convention that $i=0$ corresponds to a term of type $ \nabla y\cdot \nabla  u  \cdot \nabla D^{\alpha_{1}} u \ldots  \cdot \nabla D^{\alpha_s} u$.
    \end{enumerate}

Let $s\in \{1, \cdots ,k\}$ and $\alpha\in \N^s$,$|\alpha|=k+1-s$. The coefficient $c_{k+1}^3 (s,\alpha)$ associated to a term $\nabla y \cdot \nabla D^{\alpha_1}\cdots \nabla D^{\alpha_s} $ would be obtained:
\begin{itemize}
    \item either from i) with coefficient $c^3_{k}(s,\alpha-e_i) $  for some $i=1, \cdots , s$ where $$e_i=(0,0,\cdots,1,0,\cdots,0)\in \N^s ,$$ 
\item either from ii) if $\alpha \in \N^s$ is of the form 
$$\alpha=(\alpha_1,\cdots, \alpha_{i_0-1},0,\alpha_{i_0+1}, \cdots,\alpha_s), \quad i_0=1,\cdots , s-1 ,$$
and in this case the coefficient would be $c^3_{k}(s-1,\beta)$ with 
$$
\beta=(\alpha_1,\cdots, \alpha_{i_0-1},\alpha_{i_0+1}, \cdots,\alpha_s)\in \N^{s-1} \text{ with } |\beta|=|\alpha|.
$$
 Note that if $i_0=1$, then $\alpha$ has to be understood as
$
(0,\alpha_2,\cdots,\alpha_{s})
$
and the corresponding $\beta$ is $\beta=(\alpha_2,\cdots,\alpha_{s})\in \N^{s-1}$.
Hence, in general, for $s=1 \cdots k$ we have
\begin{align*}
|c^3_{k+1}(s,\alpha)|& \leq \left(\underset{i=1}{\overset{s}{\sum}} |c_k^3(s,\alpha-e_i)|\right)+ |c_k^3\left(s-1,(\alpha_1,\cdots,\alpha_{i_0-1},\alpha_{i_1},\cdots,\alpha_{s})\right)|\\
&\leq \left(\underset{i=1}{\overset{s}{\sum}} \frac{k!}{(\alpha-e_i)!}\right) + \frac{k!}{\alpha!}\\
&=\frac{k!}{\alpha!}\left(\underset{i=1}{\overset{s}{\sum}}\alpha_i\right)+\frac{k!}{\alpha!}= \frac{k!}{\alpha!}[(k+1-s) + 1]\leq \frac{(k+1)!}{\alpha!} .
\end{align*}
Finally if $s=k+1$, then $\alpha=0\in \N^{k+1}$ and we are considering the coefficient in front of 
$$\underbrace{\nabla y \cdot \nabla u \cdots \nabla u}_{k+2 \text{ terms}} ,$$
which can be obtained only from ii) when adding a coefficient $\nabla u$ to one of the terms in $\underbrace{\nabla y \cdot \nabla u \cdots \nabla u}_{k+1 \text{ terms}} $ with coefficient $c^3_k(k,0)$  hence 
$$
|c_{k+1}^3(k+1,0)| \leq  \underset{i=1}{\overset{k+1}{\sum}}|c_{k}^3(k,0))|\leq (k+1) \frac{k!}{0!}=(k+1)! ,
$$
which yields the desired result in the case $(s,\alpha)=(k+1,0)$.

\paragraph{Estimate of $c_{k+1}^4 (s,\alpha)$.}
The proof is analogous, the only difference lies in the case where $s=2$ and $\alpha \in \N^2$, $|\alpha|=k$ . In this case there is no term coming from i), the terms coming from ii) come from terms of type $\nabla D ^{\beta_1}u \cdot \nabla D^{\beta2} u$ with $\beta=(\beta_1,\beta_2)=\alpha-e_i$ and such that $|\beta|= k-1$.
However, one has to take into account another possibility  that is the last term appearing in \eqref{eq:calcul_rec_curl} with coefficient 1 hence 
\begin{align*}
|c_{k+1}^4(2,\alpha)|& \leq \left(\underset{i=1}{\overset{2}{\sum}} |c_k^4(2,\alpha-e_i )|\right)+ 1\\
&\leq \left(\underset{i=1}{\overset{2}{\sum}} \frac{k!}{(\alpha-e_i)!}\right) +1\\
&=\frac{k!}{\alpha!}\left(|\alpha|\right)+1= \frac{k!}{\alpha!}k+1= \frac{k! k + \alpha!}{\alpha!}\leq  \frac{k! k + k!}{\alpha!}=\frac{(k+1)!}{\alpha!},
\end{align*}
where we used that $\alpha!=\alpha_1!\cdot \alpha_2! \leq |\alpha|!=k! $ since $\alpha=(\alpha_1,\alpha_2)=(|\alpha|-p,p)=(k-p,p)$ for some $1\leq p\leq k-1$. 
\end{itemize}
\end{proof}

\section{Recursive formulas for the circulations}\label{section3bis}

We now turn to the recursive formulas for the circulations. 
\begin{prop}\label{P3}
\begin{equation} \label{eq:formule_rec_circ}
D^{k} u =   \nabla D^{k-1} P  + K^{k} [u,y-u]  ,
\end{equation}
where for $k\geq 1$,
\begin{equation}\label{eq:formule_rec_circ2}
K^{k}[u,\psi]=\sum_{r=1}^{k}  c_{k,r} \nabla \left(D^{ r-1} u\right) ^\top \cdot D^{k-r} \psi, \quad |c_{k,r}|\leq \dbinom{k}{ r},
\end{equation}
where $P$ is given by \eqref{def-g}.
\end{prop}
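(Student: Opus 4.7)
The plan is induction on $k \ge 1$. The base case $k=1$ is the already-derived identity \eqref{eq:Du}, which reads $Du = \nabla P + (\nabla u)^\top (y-u)$ and matches \eqref{eq:formule_rec_circ}--\eqref{eq:formule_rec_circ2} with $c_{1,1} = 1 = \binom{1}{1}$. For the inductive step at rank $k\ge 2$, I would apply the material derivative $D$ to the identity $D^{k-1}u = \nabla D^{k-2}P + K^{k-1}[u, y-u]$ at rank $k-1$ and push $D$ through each factor using the two commutator rules \eqref{t3.1} and \eqref{t3.2}.

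On the potential piece, the scalar commutator \eqref{t3.1} yields $D(\nabla D^{k-2}P) = \nabla D^{k-1}P - (\nabla u)^\top \nabla D^{k-2}P$. On each summand $\nabla(D^{r-1}u)^\top D^{k-1-r}(y-u)$ of $K^{k-1}$, the Leibniz rule combined with the vector commutator \eqref{t3.2} (applied with $\psi = D^{r-1}u$), together with $D(y-u) = -Du$ (since $Dy=0$), produces three pieces: a shift-up in the first factor $\nabla(D^r u)^\top D^{k-1-r}(y-u)$, a shift-up in the second factor $\nabla(D^{r-1} u)^\top D^{k-r}(y-u)$, and an unwanted residue $-(\nabla u)^\top \nabla(D^{r-1} u)^\top D^{k-1-r}(y-u)$.

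The key observation is that the $(\nabla u)^\top$-prefactored terms cancel. Substituting the induction hypothesis $\nabla D^{k-2}P = D^{k-1}u - K^{k-1}[u, y-u]$ into the first identity turns $-(\nabla u)^\top \nabla D^{k-2}P$ into $-(\nabla u)^\top D^{k-1} u + (\nabla u)^\top K^{k-1}[u, y-u]$, while summing the unwanted residues over $r$ reproduces exactly $-(\nabla u)^\top K^{k-1}[u, y-u]$, so these two pieces cancel. The leftover $-(\nabla u)^\top D^{k-1} u$ is rewritten as $+(\nabla u)^\top D^{k-1}(y-u)$ using $D^{k-1}y = 0$, hence slots into the $r=1$ position of $K^k$ with an extra weight $+1$.

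After reindexing the first shift-up via $r \mapsto r+1$ and gathering terms of the form $\nabla(D^{r-1} u)^\top D^{k-r}(y-u)$, one reads off the recursion $c_{k,r} = c_{k-1,r-1} + c_{k-1,r}$ for $1 \le r \le k$, with the edge conventions $c_{k-1,0} := 1$ (supplied by the $+1$ just produced) and $c_{k-1, k} := 0$. Combined with $c_{1,1} = 1$, this is Pascal's rule and yields $c_{k,r} = \binom{k}{r}$, which matches the claimed bound with equality. The delicate step is spotting the cancellation of the $(\nabla u)^\top K^{k-1}$ contributions; once that is in place the combinatorics collapse to Pascal and the induction closes immediately.
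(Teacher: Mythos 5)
Your proof is correct and follows essentially the same route as the paper's: induction on $k$, the commutator rules \eqref{t3.1}--\eqref{t3.2} applied to the potential term and to the first factors of $K$, the key cancellation of the $(\nabla u)^\top$-prefactored residues against $-(\nabla u)^\top$ times the induction hypothesis, the rewriting of the leftover $(\nabla u)^\top D^{k-1}u$ term via $D^{k-1}y=0$ into the $r=1$ slot, and Pascal's rule for the coefficients. The only (harmless) difference is that by tracking signs you obtain the exact identity $c_{k,r}=\binom{k}{r}$, whereas the paper contents itself with the bound $|c_{k,r}|\leq \binom{k}{r}$ via the triangle inequality.
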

\begin{proof}
 The recursive formula for $k=1$ is given by \eqref{eq:Du}, $c_{1,1}= 1$. 
Let us assume that it holds up to order $k$. 
    We then apply the material derivative $D$ to 
    \eqref{eq:formule_rec_circ}, use \eqref{eq:formule_rec_circ2} and the Leibnitz' rule to obtain that 
    \begin{align} \label{ici}
        D^{k+1}u&=D\nabla D^{k-1} P +\sum_{r=1}^{k}  c_{k,r} D\nabla \left(D^{ r-1} u\right)^\top \cdot D^{k-r}  (y-u)
        \\ \notag   &\quad +\sum_{r=1}^{k}  c_{k,r} \nabla \left(D^{ r-1} u\right) ^\top \cdot D^{k+1-r} (y-u) . 
    \end{align}
 Let us now mention the following formula 
    $$
    D \nabla \psi^\top=\nabla D \psi ^\top-\nabla u^\top \cdot \nabla \psi^\top ,
    $$
    which is true for any smooth vector field $\psi$.
     Using the commutation rule \eqref{t3.1} to deal with the first term in \eqref{ici} 
     and the previous formula with $\psi =D^{ r-1} u$ to deal with the first factor of the second term in  \eqref{ici}  we get 
  \begin{align}\notag 
        D^{k+1}u&=\nabla  D^{k} P  -\nabla u^\top \cdot \nabla D^{k-1} P +\sum_{r=1}^{k}  c_{k,r} \left(\nabla \left(D^{ r} u\right) ^\top \cdot D^{k-r}(y-u)- \nabla u^\top \cdot \nabla \left(D^{ r-1} u\right)^\top \cdot D^{k-r}  (y-u)\right) \\
      \label{la}    &\quad +\sum_{r=1}^{k}  c_{k,r} \nabla \left(D^{ r-1} u\right)^\top \cdot D^{k+1-r}  (y-u).
         \end{align} 
Now, we observe that the second term and the second half of the term coming from the first sum (the third term) 
of the r.h.s. above can be simplified by using  the recursive formula at order $k$, that is 
  \begin{align}  \notag 
       -\nabla u^\top \cdot \nabla D^{k-1} P - \sum_{r=1}^{k}  c_{k,r} \nabla u^\top \cdot \nabla \left(D^{ r-1} u\right)^\top \cdot D^{k-r}  (y-u)
        &= - \nabla u^\top \cdot  \left( \nabla D^{k-1} P  + K^{k} [u,y-u] \right)
 \\ \label{la2}  &= - \nabla u^\top \cdot  D^{k}u . 
         \end{align} 
Combining \eqref{la} and \eqref{la2} we arrive at 

  \begin{align*}
        D^{k+1}u  &= \nabla  D^{k} P - \nabla u^\top \cdot D^k u + \sum_{r=1}^{k}  c_{k,r} \left( \nabla \left(D^{ r} u\right) ^\top \cdot D^{k-r}  (y-u)+ \nabla \left(D^{ r-1} u\right) ^\top \cdot D^{k+1-r} (y-u)\right) .
     \end{align*} 
     This entails the following. 
     \begin{itemize}
         \item  For $r=1$, the coefficients $c_{k,1}$ and $-1$ (coming from the term $-\nabla u^\top \cdot D^k u$ ) contribute to $c_{k+1,1}$ hence
    $$
    |c_{k+1,1}|\leq |c_{k,1}|+1\leq k+1 = \dbinom{k+1}{ 1}.
    $$
 \item   For $r=2,\cdots, k$, only the coefficients $c_{k,r} $ and $c_{k,r-1} $ contribute to $c_{k+1,r}$, hence
    $$
    |c_{k+1,r}|\leq |c_{k,r}|+|c_{k,r-1}| \leq \dbinom{k}{ r}+\dbinom{k}{ r-1}= \dbinom{k+1}{ r} .
    $$
     \item  Finally for $r=k+1$, only the coefficient $c_{k,k}$ contributes to the coefficient $c_{k+1,k+1}$ hence
    $$
    |c_{k+1,k+1}|\leq |c_{k,k}| =1=\dbinom{k+1}{k+1} .
    $$
     \end{itemize}
    This proves    \eqref{eq:formule_rec_circ}-\eqref{eq:formule_rec_circ2} at order $k+1$ and concludes the proof of Proposition \ref{P3}. 
\end{proof}

%%%%%%%%%%
%%%%%%%%%%%
\section{Proof of Theorem \ref{main}}
\label{section4}

Let $|\Gamma_i|$ denoting  the length of the loops involved in Lemma \ref{Lemme1}, we introduce the constant 
\begin{equation}\label{eq:def_C_gamma}
C_\Gamma:= \left(\underset{i=1}{\overset{g}{\sum}} |\Gamma_i|^2\right)^{1/2} .
\end{equation}
Recalling that the constant $C_\Omega$ is the one appearing in \eqref{eq:def_C_Omega}, let us also introduce for $L>0$, 
 \begin{equation}\label{def_gamma}
 \gamma(L)=\underset{k\geq 1}{\sup} \left \{ 
 4\underset{s=2}{\overset{k+1}{\sum}} s c_\rho^s L^{1-s} \frac{(k+1)^2 20^s}{(k+2-s)^2}+ C_\Gamma  \frac{C_\Omega +1}{C_\Omega L}\underset{r=1}{\overset{k}{\sum}} \frac{(k+1)^2}{r^3(k-r+1)^2}
 \right \} .
 \end{equation}
 We assume that $L$ is large enough such that $L>1/C_\Omega$ and 
\begin{equation}\label{hyp_L}
\gamma(L)\leq \frac{1}{c_{\mathfrak r}} ,
\end{equation}
where the constant $c_\mathfrak{r}$ appears in Lemma \ref{Lemme1}.

Recall that $u$ is the divergence free part of $y$ satisfying \eqref{AHT}.
We are going to prove by induction that, supposing that $u$ is smooth, there exists $t^\star \in (0,T)$ such that one has for all $k\in \mathbb{N}$ and all $t \in [-t^\star,t^\star]$, 
\begin{equation}
\label{indu}
\| D^k u(t) \|  \leq  C_\Omega \frac{k! (C_\Omega L)^k }{(k+1)^2} \|y(t)\|^{k+1} ,
\end{equation}
 Indeed, such an estimate proves the analyticity for smooth solutions. In the general case, the idea is to regularize the initial data $y_0$ to obtain a sequence of $\mathcal{C}^\infty$ solutions $(y^n)_n$ with a uniform maximal time existence in $n$ and smooth velocities $(u^n)_n$ due to the continuity of the Leray projector. Smoothness in time is recovered from smoothness in space variable thanks to the equations. Hence we recover the analyticity of the flows $\Phi^n$ using the estimate \eqref{indu} for each velocity $u^n$ with constants independent of $n$ and conclude by passing in the limit when $n\to \infty$.

\paragraph{Proof of \eqref{indu} by induction.}
For $k=0$, there is nothing to prove. 

Let us assume that \eqref{indu} holds up to order $k-1$.  The idea is to
apply Lemma \ref{Lemme1} to $D^{k} u=D^{k} \mathbb{P}y$ to get 
%\eqref{fond}.
%
\begin{equation}
\label{indu2}
\| D^{k} u \| \leq c_{\mathfrak r} \left(| \div D^k u | + | \curl D^k u| +  \| D^{k} u\cdot n \|_{\partial \Omega}+ | \Pi D^k u |_{\R^{g}} \right) .
\end{equation}

We now recall \cite[Lemma 7.3.3]{Chemin}, which will be of repetitive use. 
%e recall for the reader's convenience.  
%

%
%
\begin{lemme} \label{LemmeChemin}
For any couple of positive integers $(s,m)$ we have
\begin{equation}\label{DefUpsilon}
\sum_{\substack{{\alpha \in \N^{s}} \\ {|\alpha|=m} }}  \Upsilon(s,\alpha) \leq \frac{20^{s}}{(m+1)^{2}}, \text{ where }
\Upsilon(s,\alpha):=\prod_{i=1}^s \frac{1}{(1+\alpha_{i})^2}.
\end{equation}
\end{lemme}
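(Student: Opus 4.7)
The plan is to proceed by induction on $s$, viewing the sum as an $s$-fold discrete convolution of the sequence $f(n):=1/(1+n)^2$ evaluated at $m$. The core of the argument will be a single convolution inequality, which delivers the factor $20$ per iteration.

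The base case $s=1$ is immediate since the sum reduces to the single term $\alpha=(m)$, giving $\Upsilon(1,\alpha)=1/(m+1)^2\leq 20/(m+1)^2$. For the inductive step, assume the bound holds at level $s$. Splitting off the last coordinate $\alpha_{s+1}=j$ and summing the remaining $s$ coordinates first, one obtains
\begin{equation*}
\sum_{\substack{\alpha\in\N^{s+1}\\|\alpha|=m}}\Upsilon(s+1,\alpha)=\sum_{j=0}^{m}\frac{1}{(1+j)^2}\sum_{\substack{\beta\in\N^{s}\\|\beta|=m-j}}\Upsilon(s,\beta)\leq 20^{s}\sum_{j=0}^{m}\frac{1}{(1+j)^2(m-j+1)^2}.
\end{equation*}
The whole inductive step therefore reduces to proving the convolution estimate
\begin{equation*}
\sum_{j=0}^{m}\frac{1}{(1+j)^2(m-j+1)^2}\leq \frac{20}{(m+1)^2}.
\end{equation*}

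To establish this, I would split the sum at $j=\lfloor m/2\rfloor$. For indices $j\leq m/2$ one has $m-j+1\geq (m+1)/2$, hence $(m-j+1)^{-2}\leq 4/(m+1)^2$, and therefore
\begin{equation*}
\sum_{j=0}^{\lfloor m/2\rfloor}\frac{1}{(1+j)^2(m-j+1)^2}\leq \frac{4}{(m+1)^2}\sum_{j=0}^{\infty}\frac{1}{(1+j)^2}=\frac{4\zeta(2)}{(m+1)^2}=\frac{2\pi^2/3}{(m+1)^2}.
\end{equation*}
By the symmetry $j\leftrightarrow m-j$, the remaining half of the sum satisfies the same bound, so the total is controlled by $4\pi^2/3<20$ divided by $(m+1)^2$, as required.

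There is essentially no obstacle here beyond the analytic bookkeeping; the choice of constant $20$ is comfortably larger than $4\pi^2/3\approx 13.16$, which gives room and also ensures the base case $s=1$. The only mildly delicate point is the verification that the crude bound $(m-j+1)^{2}\geq (m+1)^2/4$ on $\{j\leq m/2\}$ together with its mirror on $\{j\geq m/2\}$ exactly cover the sum (with a possible double counting of the middle term which only improves the bound). This convolution trick is precisely what allows the exponential factor $20^{s}$ to propagate while keeping the $(m+1)^{-2}$ decay in $m$ intact.
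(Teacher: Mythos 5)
Your proof is correct, and it coincides with the standard argument: the paper itself gives no proof of this lemma, recalling it from Chemin's book (Lemma 7.3.3 of \cite{Chemin}), where it is established by exactly your route --- induction on $s$ reduced to the convolution bound $\sum_{j=0}^{m}(1+j)^{-2}(m-j+1)^{-2}\leq 20/(m+1)^2$, proved by splitting the sum at $j=m/2$ and using $\zeta(2)=\pi^2/6$. The only point left implicit, which is harmless, is that the inductive hypothesis is invoked at $m-j=0$ when $j=m$, where the bound holds trivially since the sum then equals $1\leq 20^s$.
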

Let us now estimate each of the four terms appearing on the right-hand side of \eqref{indu2}. 

\paragraph{Divergence.}
Using Proposition \ref{P1} together with \eqref{indu} we have 
\begin{align}\label{ineq_div}
| \div D^k u |& \leq \, \underset{s=2}{\overset{k+1}{\sum}}\,\underset{|\alpha|=k+1-s}{\sum} \frac{k!}{\alpha!}\,\underset{i=1}{\overset{s}{\prod}} \|D^{\alpha_i}u\|\\ \notag
&\leq  \, \underset{s=2}{\overset{k+1}{\sum}} \underset{|\alpha|=k+1-s}{\sum}\frac{k!}{\alpha!} \,\underset{i=1}{\overset{s}{\prod}}\frac{\alpha_i !C_\Omega (C_\Omega L)^{\alpha_i}}{(\alpha_i+1)^2} \|y \|^{\alpha_i+1} \\ \notag
&= k!C_\Omega (C_\Omega L)^k \|y\|^{k+1}  \, \underset{s=2}{\overset{k+1}{\sum}} L^{1-s}\underset{|\alpha|=k+1-s}{\sum} \,\underset{i=1}{\overset{s}{\prod}}  \frac{1}{(\alpha_i+1)^2}\\ \notag
&\leq C_\Omega \frac{k! (C_\Omega L)^k \|y\|^{k+1}}{(k+1)^2}  \, \underset{s=2}{\overset{k+1}{\sum}} L^{1-s}\,\frac{(k+1)^2 20^s}{(k+2-s)^2},
\end{align}
where we used Lemma \ref{LemmeChemin}. 

\paragraph{Rotational.} For the $\curl$,  using Proposition \ref{P2}, we have that 
\begin{align}\label{ineq_curl}
| \curl D^k u |& \leq \|y\|  \, \underset{s=1}{\overset{k}{\sum}} \, \underset{|\alpha|=k-s}{\sum}  \frac{k!}{\alpha!} \  \underset{i=1}{\overset{s}{\prod}}\|D^{\alpha_i}u\|+   \, \underset{s=2}{\overset{k+1}{\sum}} \, \underset{|\alpha|=k+1-s}{\sum}\  \frac{k!}{\alpha!} \underset{i=1}{\overset{s}{\prod}}\|D^{\alpha_i}u\| \\ \notag
&\leq \|y\|  \, \underset{s=1}{\overset{k}{\sum}}\underset{|\alpha|=k-s}{\sum} \frac{k!}{\alpha!}\  \underset{i=1}{\overset{s}{\prod}}\frac{\alpha_i !C_\Omega (C_\Omega L)^{\alpha_i}}{(\alpha_i+1)^2} \|y \|^{\alpha_i+1} +  \, \underset{s=2}{\overset{k+1}{\sum}} \, \underset{|\alpha|=k+1-s}{\sum} \frac{k!}{\alpha!}\   \underset{i=1}{\overset{s}{\prod}}\frac{\alpha_i !C_\Omega (C_\Omega L)^{\alpha_i}}{(\alpha_i+1)^2} \|y \|^{\alpha_i+1} \\ \notag
&= k! L^k \|y\|^{k+1} (C_\Omega L)^k \,\left(\underset{s=1}{\overset{k}{\sum}} L^{-s}\,\underset{|\alpha|=k-s}{\sum} \ \underset{i=1}{\overset{s}{\prod}}  \frac{1}{(\alpha_i+1)^2}+ \underset{s=2}{\overset{k+1}{\sum}}C_\Omega L^{1-s}\,\underset{|\alpha|=k+1-s}{\sum}\ \underset{i=1}{\overset{s}{\prod}}  \frac{1}{(\alpha_i+1)^2} \right) \\ \notag
&\leq  C_\Omega \frac{k! (C_\Omega L) ^k \|y\|^{k+1}}{(k+1)^2}  \,\left(\underset{s=1}{\overset{k}{\sum}} \frac{L^{-s}}{C_\Omega} \frac{(k+1)^2 20^s}{(k-s+1)^2}+ \underset{s=2}{\overset{k+1}{\sum}} L^{1-s}\,\frac{(k+1)^2 20^s}{(k-s+2)^2} \right) \\ \notag
&=  C_\Omega \frac{k! (C_\Omega L)^k \|y\|^{k+1}}{(k+1)^2}  \,\left(\underset{s=2}{\overset{k+1}{\sum}} L^{1-s} \frac{(k+1)^2 20^s}{(k-s+2)^2} \right) \left (1+\frac{1}{L C_\Omega} \right ) \\
&  \leq 2C_\Omega \frac{k! (C_\Omega L)^k \|y\|^{k+1}}{(k+1)^2}  \,\left(\underset{s=2}{\overset{k+1}{\sum}} L^{1-s} \frac{(k+1)^2 20^s}{(k-s+2)^2} \right)  ,
\end{align}
where we used again Lemma \ref{LemmeChemin}  and that $C_\Omega L >1 $ for $L$ large enough. 

\paragraph{Normal trace.}
For the boundary term, using Proposition \ref{P1norm} and \eqref{RhoAnalytique}, we have that 
\begin{align}\label{ineq:boundary}
    \| D^k u \cdot n\|_{\partial \Omega} & \leq \underset{s=2}{\overset{k+1}{\sum}} \| \nabla^s \rho\|  \underset{|\alpha|=k+1-s}{\sum} \frac{k!}{\alpha! (s-1)!} \  \underset{i=1}{\overset{s}{\prod}} \|D^{\alpha_i} u\|\\ \notag
    &\leq \underset{s=2}{\overset{k+1}{\sum}} c_\rho^s s!   \underset{|\alpha|=k+1-s}{\sum} \frac{k!}{\alpha! (s-1)!} \ \underset{i=1}{\overset{s}{\prod}} \frac{\alpha_i! C_\Omega (C_\Omega L)^{\alpha_i} \|y\|^{\alpha_i+1}}{(\alpha_i+1)^2}\\ \notag
    &\leq C_\Omega \frac{k! \|y\|^{k+1}(C_\Omega L)^k}{(k+1)^2} \underset{s=2}{\overset{k+1}{\sum}}  c_\rho^s s L^{1-s} \frac{(k+1)^2 20^s}{(k+2-s)^2} .
\end{align}
\paragraph{Circulations.}
Eventually, using Proposition \ref{P3}, we have that 
\begin{equation} \label{egua}
  \Pi D^k u = \Pi K^k[u,y-u] ,  
\end{equation}
with
\begin{align*}
    |K^k[u,y-u] |& \leq \underset{r=1}{\overset{k-1}{\sum}} |c_{k,r}|\, \|D^{r-1} u\|\, |D^{k-r}(y-u)| + \|D^{k-1}u\| \, |y-u| \\
    &\leq \underset{r=1}{\overset{k-1}{\sum}} \frac{k!}{r!(k-r)!}\frac{(r-1)!}{r^2} C_\Omega (C_\Omega L)^{r-1} \|y\|^r \frac{(k-r)!}{(k-r+1)^2} C_\Omega (C_\Omega L)^{k-r} \|y\|^{k-r+1}\\
    &+\frac{(k-1)! C_\Omega (C_\Omega L)^{k-1}}{k^2}\|y\|^k (C_\Omega +1)|y|,
\end{align*}
    where we used that $D^{k-r} y = 0 $ for $r \leq k-1$, the triangle inequality and 
\eqref{eq:def_C_Omega}. This yields
 \begin{align*}
    |K^k[u,y-u] |& \leq   
 C_\Omega^2 \|y\|^{k+1} (C_\Omega L)^{k-1} k! \underset{r=1}{\overset{k-1}{\sum}}\frac{1}{r^3(k-r+1)^2}+ k! \|y\|^{k+1} C_\Omega (C_\Omega L)^{k-1} (C_\Omega +1) \frac{1}{k^3}\\
   &\leq \frac{C_\Omega +1}{C_\Omega L} C_\Omega \|y\|^{k+1} (C_\Omega L)^{k} k! \underset{r=1}{\overset{k}{\sum}}\frac{1}{r^3(k-r+1)^2} .
\end{align*}
By \eqref{egua} we deduce that 
\begin{equation}\label{ineq_Pi}
|\Pi D^k u|\leq C_\Gamma   \frac{C_\Omega +1}{C_\Omega L} C_\Omega  \|y\|^{k+1} (C_\Omega L)^{k} k! \underset{r=1}{\overset{k}{\sum}}\frac{1}{r^3(k-r+1)^2} .
\end{equation}

\paragraph{Endgame.}
Replacing \eqref{ineq_div}, \eqref{ineq_curl},\eqref{ineq:boundary},\eqref{ineq_Pi} into \eqref{indu2} together with \eqref{def_gamma} we get 
$$
\|D_k u \|\leq c_{\mathfrak r} C_\Omega \frac{k! \|y\|^{k+1} (C_\Omega L)^k }{(k+1)^2} \gamma(L).
$$
We then conclude thanks to \eqref{hyp_L} that \eqref{indu} holds true at order $k$. 
By \eqref{chainrule}, we deduce that 
$\Phi$ is analytic from $(-T, T)$ to $C^{1, \alpha}(\Omega)$ with $T$ depending only on $y_0$ and $\Omega$. 
This ends the proof of Theorem 
\ref{main}.

%%%
\section{Remarks and open questions} \label{section7}

\paragraph{More limited regularity.}
Regarding the main result on the analyticity of the trajectories in Theorem \ref{main}, we emphasize the following  possible extensions and open questions.

    \begin{itemize}
        \item Since the result is obtained through a recursive argument, one can also show that if the regularity of the boundary of the domain is $C^k$ then the trajectories are $C^k$ in time.
        \item For less regular boundary domains, following \cite[Theorem 13]{Sueur}, one can show that if the boundary is $C^2$ and $y_0$ constant outside of a compact $K \subset \Omega$, then one can recover the analyticity of the flow map for a small time $T>0$.
        \item For less regular data, with Yudovich type solutions in the case of Lipschitz initial data,  one could expect to get less regularity then analyticity, for instance Gevrey regularity.
    \end{itemize}

\paragraph{Open questions for large times.}

The idea introduced by Angenent, Haker and Tannenbaum  in \cite{AHT} 
is that if the solution to \eqref{AHT} with initial data $y_0$ has a limit at large times, then this limit should be the unique optimal rearrangement of the initial data $y_0$.  Recall that  two $L^2$ maps $y_1, y_2:\Omega\to \R^d$ are rearrangements of each other if they define the same image measure of the Lebesgue measure, i.e.
\[
\int_\Omega f(y_1(x))dx=\int_\Omega f(y_2(x))dx ,
\]
for all compactly supported continuous function $f:\R^d\to \R$. 
The mass transport problem as first formulated by Monge, and reformulated in a modern mathematical formulation by Kantorovich, is then the issue of finding the optimal way, in the sense of minimizing a transportation cost, to rearrange.  
 
In \cite{Brenier91}, Brenier proved that for each $L^2$ map $y_0: \Omega\to \R^d$ there exists a unique rearrangement  $y^*$ with  convex potential, i.e. $y^*=\nabla p^*$ for some convex function $p^*$. Moreover,   $y^*$ minimizes the quadratic cost function 
\[
\int_{\Omega}|y(x)-x|^2dx,
\]
among all possible rearrangements of $y_0$.

Note that the global in time issue remains an open question in general, as explained in \cite{AHT}. However, in addition to the local existence and uniqueness result mentioned above, the authors prove global existence for regularized solutions $\{\gamma_\varepsilon^t, t\geq 0\}$ that are weakly converging to a family of measures $\{\gamma^t, t\geq 0\}$ with decreasing costs and whose $\omega$-limit set consists of critical points of the Monge-Kantorovich functional.
Moreover, they observed that, since the velocity $u$ is divergence-free and tangent to the boundary, we have that for each $t>0$, $y(t)$ is a rearrangement of $y(0)$. 
Moreover, 
$$ \frac{d}{dt} \int_\Omega \frac12 |y-x|^2 \; dx  = - \int_\Omega |u|^2\; dx .$$
Therefore, for any  
steady state $y$, the Leray projection $u=\P y$ vanishes so that $y$ is a gradient field and 
conversely, all gradients are steady states. 
Moreover if a steady state is the gradient of 
 a convex function, then by the uniqueness part of Brenier's theorem \cite{Brenier91}, it has to be then the rearrangement of $y(0)$. 
The idea promoted by 
Angenent, Haker and Tannenbaum is therefore that one may
 obtain some numerical approximations of  the optimal rearrangement of a given $y_0$
 by  considering simulations for large times of the corresponding solution of the AHT.

 In \cite{Nguyen^2} the authors succeeded to prove that strictly convex potentials are stable, while the case of merely convex potential is still open. 
 The question whether the Lipschitz norm, in space, of such solutions may blow up in finite
time as it would be the case of the inviscid Burgers equation, is open. In \cite{Nguyen^2}, Huy Q. Nguyen and Toan T. Nguyen 
proved that for initial maps which are sufficiently close to maps with strictly convex potential, the corresponding solutions to the AHT System are global in time and converge exponentially fast to the optimal rearrangement of the initial map as time tends to infinity.   

On the other hand, in \cite[Theorem 6.1]{DrivasElgindi}, the authors exhibit necessary and sufficient conditions on the initial incompressible velocity $u_0$ for which the Burgers equation admits a global solution on $\mathbb{T}^d$ and $\R^d$. Such a solution is also a global solution to the \eqref{AHT} system with $y=\mathbb{P}y=u$.

%Dans le cadre des Nguyens on pense pouvoir montrer qu'en TEMPS LONG, le rayon d'analyticité $r(t)$ tend vers $+\infty$ à cause de ce que le rayon (pour $D^k u$) est inversement proportionnelle à la taille Sobolev ou Holder de la solution qui tend vers $0$ exponentiellement. De plus un composant avec le flot on a une seule exponentielle croissante pour $k$ décroissante. \am{Enlever cette partie?}

%%%%%
%%%%%

\bigskip \ \par \noindent {\bf Acknowledgements.} F. S. was partially supported by the Agence Nationale de la Recherche,  Project BOURGEONS, grant ANR-23-CE40-0014-01. 
%FAUDRA RAJOUTER ANR-DFG plus tard ds le processus !

%%%%%
%%%%%

\Addresses

\end{document}